\documentclass{amsart}

\title{On the ampleness of positive CR line bundles over Levi-flat manifolds}
\author{Masanori Adachi}
\address{Graduate~School~of~Mathematics, Nagoya~University, Furo-cho Chikusa-ku Nagoya 464-8602, Japan}
\email{m08002z@math.nagoya-u.ac.jp}
\subjclass[2010]{Primary~32V30, Secondary~32E10, 32V25, 53D35.}
\keywords{projective embedding, Levi-flat CR manifold, holomorphic disc bundle, pseudoconvexity, confoliation}
\date{October 17, 2013}
\thanks{This is the author's final version of an article to appear in Publ. Res. Inst. Math. Sci.}

\usepackage{amsmath,amsthm,amssymb,amsrefs,latexsym}

%


\newtheorem{Thm}{Theorem}[section]
\newtheorem{Cor}[Thm]{Corollary}
\newtheorem{Lem}[Thm]{Lemma}
\newtheorem{Prop}[Thm]{Proposition}
\newtheorem*{Thm*}{Theorem}


\newtheorem*{MainTheorem}{Main Theorem}


\theoremstyle{definition}
\newtheorem{Def}[Thm]{Definition}
\newtheorem{Rem}[Thm]{Remark} 
\newtheorem{Ex}[Thm]{Example}
\newtheorem{Que}{Question}



\numberwithin{equation}{section}  


\def\N{\mathbb{N}}
\def\R{\mathbb{R}}
\def\C{\mathbb{C}}
\def\D{\mathbb{D}}
\def\Cont{\mathcal{C}}
\def\CP{\mathbb{CP}}
\def\Aut{\mathrm{Aut}}
\def\End{\mathrm{End}}
\def\Ker{\mathrm{Ker}}
\def\id{\mathrm{Id}}
\def\rank{\mathrm{rank}}
\newcommand{\eqdef}{\mathrel{\mathop:}=}
\newcommand{\timesrho}{\times_\rho}

\def\d'{\partial}
\def\dbar{\overline{\partial}}
\def\bd{\partial}

\allowdisplaybreaks 

\makeatletter
\newsavebox{\@brx}
\newcommand{\llangle}[1][]{\savebox{\@brx}{\(\m@th{#1\langle}\)}%
  \mathopen{\copy\@brx\kern-0.5\wd\@brx\usebox{\@brx}}}
\newcommand{\rrangle}[1][]{\savebox{\@brx}{\(\m@th{#1\rangle}\)}%
  \mathclose{\copy\@brx\kern-0.5\wd\@brx\usebox{\@brx}}}
\makeatother

\begin{document}

\maketitle

\begin{abstract}
We give an example of a compact Levi-flat CR 3-manifold with a positive-along-leaves 
CR line bundle which is not ample with respect to transversely infinitely differentiable 
CR sections.  
This example shows that we cannot improve the regularity of Kodaira type embedding theorem 
for compact Levi-flat CR manifolds obtained by Ohsawa and Sibony.
\end{abstract}


\section{Introduction}
We are going to study function theory on Levi-flat CR 3-manifolds, 
i.e., 3-manifolds foliated by Riemann surfaces. 
If we look them as families of Riemann surfaces, we can expect analogy with 
classical theory on Riemann surfaces, such as the Riemann-Roch theorem.  
But here is a new ingredient: dynamics of the foliation. 
We should face subtle interaction between complexity of Levi foliations 
and existence of CR functions, and especially in the case where the 
Levi-flat CR 3-manifold is realized as a real hypersurface in a complex surface, 
it should be reflected on pseudoconvexity of the complement, and complex geometry of the ambient space.
There are several attempts toward this direction. We refer the reader to the works of 
Inaba \cite{inaba1992} and  Barrett \cite{barrett1992}.

We investigate such phenomenon in a problem on an analogue of Kodaira's embedding theorem.
Ohsawa and Sibony proved the following Kodaira type embedding theorem.

\begin{Thm*}[\cite{ohsawa-sibony2000}*{Theorem 3}, refined in \cite{ohsawa2012}]
Let $M$ be a compact $\Cont^\infty$ Levi-flat CR manifold equipped with a $\Cont^\infty$ CR line bundle $L$. 
Suppose $L$ is positive along leaves, i.e., there exists a $\Cont^\infty$ hermitian metric on $L$ such 
that the restriction of the curvature form to each leaf is everywhere positive definite. 
Then, for any $\kappa \in \N$, $L$ is $\Cont^\kappa$-ample, i.e., there exists $n_0 \in \N$ such that one can find 
CR sections $s_0, \cdots, s_N$ of $L^{\otimes n}$, of class $\Cont^\kappa$, 
for any $n \geq n_0$, such that the ratio $(s_0 : \cdots : s_N)$ embeds $M$ into $\CP^N$.
\end{Thm*}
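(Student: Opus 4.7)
The plan is to mimic Kodaira's classical strategy: for $n \gg 1$, produce enough CR sections of $L^{\otimes n}$ of class $\Cont^\kappa$ to separate points of $M$ and to generate the cotangent directions needed for an immersion, then conclude by compactness of $M$. The production of sections will rest on $L^2$-estimates adapted to the Levi-flat setting, and the $\Cont^\kappa$-regularity will come from an elliptic bootstrap performed in both the leafwise and transverse directions.

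First I would fix $p \in M$ and work in a foliated chart around $p$ in which $L$ is trivialized and the hermitian metric has weight $e^{-\varphi}$ with $\varphi$ leafwise strictly plurisubharmonic. For any prescribed holomorphic jet at $p$ along the leaf through $p$, truncating the jet by a cutoff gives a local CR section $s_0$ of $L^{\otimes n}$ whose $\dbar_b s_0$ is supported on an annular region away from $p$. Solving $\dbar_b u = \dbar_b s_0$ globally on $M$ with $u$ vanishing to prescribed order at $p$ then yields a CR section $s_0 - u$ realizing the desired jet; choosing finitely many such jets at finitely many points, for $n$ sufficiently large, produces sections whose ratio is a $\Cont^\kappa$ embedding of $M$ into $\CP^N$.

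For the solvability step I would apply leafwise H\"ormander estimates on each leaf with weight $n\varphi$ plus a logarithmic singularity at $p$ of sufficient weight to force vanishing to the required order, uniform in the transverse parameter. Leafwise strict positivity of $n\varphi$ can be arranged to dominate both the singularity of the logarithmic term and the error produced by the cutoff, provided $n$ is large enough depending on $\kappa$. Integrating the resulting leafwise $L^2$-bounds in the transverse direction then yields a global $L^2$ CR solution, automatically smooth along leaves by leafwise ellipticity of $\dbar$.

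The main obstacle is \emph{transverse regularity}. Leafwise ellipticity gives $\Cont^\infty$-smoothness along each leaf, but transverse derivatives of $u$ are not controlled by the leafwise estimate alone. To upgrade the solution to $\Cont^\kappa$ one would differentiate the equation $\kappa$ times in transverse directions and reapply the $L^2$-estimate to each derivative; each such differentiation produces new source terms involving transverse derivatives of $\varphi$, of the complex structure on leaves, and of the cutoff, which remain in $L^2$ only because the ambient data is $\Cont^\infty$. Controlling these source terms uniformly across leaves is the technical heart of the proof, and as the counterexample constructed in this paper will show, this bootstrap cannot in general be pushed all the way to $\Cont^\infty$.
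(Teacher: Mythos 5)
First, a point of orientation: the paper does not prove this statement at all---it is quoted verbatim from Ohsawa--Sibony \cite{ohsawa-sibony2000} (as refined in \cite{ohsawa2012}) and used as the backdrop for the paper's actual result, which is that the regularity $\kappa$ cannot be pushed to $\infty$. So there is no in-paper proof to compare against; I can only assess your sketch against the known argument in those references.

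Your outline has the right overall shape (peaked sections via $L^2$-methods, with transverse regularity as the crux), but it contains a genuine gap exactly at the point you flag as ``the technical heart.'' The scheme you propose is leafwise: you solve $\dbar u = \dbar_b s_0$ on each leaf by H\"ormander's estimate with weight $n\varphi$ plus a logarithmic singularity, and then ``integrate the leafwise bounds in the transverse direction.'' This does not produce a global CR section. The leaves of the Levi foliation are in general non-compact and dense, the minimal $L^2$-solution on a leaf depends on the leaf in a way that is not controlled by the leafwise estimate, and there is no reason the resulting family of solutions is even measurable---let alone $\Cont^\kappa$---in the transverse parameter. Consequently the proposed bootstrap (``differentiate the equation $\kappa$ times transversally and reapply the estimate'') cannot be started: the leafwise estimate gives no control whatsoever on transverse derivatives of the solution you selected, and differentiating an equation that only holds leaf-by-leaf in a transverse direction is not meaningful for that solution. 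The actual proof avoids this by working globally on $M$ from the outset: one establishes a priori estimates for the tangential operator $\dbar_b$ in Sobolev norms $W^s(M)$ (via a K\"ahler-type identity on Levi-flat manifolds), where the curvature positivity of $L^{\otimes n}$ must dominate error terms whose size grows with $s$; this is precisely why the threshold $n_0$ depends on $\kappa$ and why the statement cannot be made uniform in $\kappa$---the phenomenon this paper's Main Theorem then shows is unavoidable. A second, smaller gap: a logarithmic weight at a single point $p$ forces vanishing of the solution along the leaf through $p$ only; it does nothing to separate $p$ from points on nearby leaves, and separation in the transverse (real one-dimensional) direction requires a separate argument that your sketch does not address.
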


We can make the regularity $\kappa \in \N$ arbitrarily large although we need to take $n_0$ sufficiently large. 
A natural question is whether we can improve the regularity to $\kappa = \infty$. 
The answer is no, in general, as the following case-study tells us.

\begin{MainTheorem}
Let $\Sigma$ be a compact Riemann surface, and $\mathcal{D}$ a holomorphic disc bundle over $\Sigma$. 
Denote its associated compact $\Cont^\infty$ Levi-flat CR manifold by $M = \bd \mathcal{D}$ 
in the associated flat ruled surface $\pi \colon X \to \Sigma$. 
Take a positive line bundle $L$ over $\Sigma$. 
Suppose that $\mathcal{D}$ has a unique non $\pm$holomorphic harmonic section $h$ with $\rank_\R dh = 2$ on an open dense set. 
Then, $\pi^*L|M$ is positive along leaves, but never $\Cont^\infty$ ample.
\end{MainTheorem}

We can easily see that the pull-back bundle $\pi^*L|M$ is positive along leaves. 
Thus, this theorem states non $\Cont^\infty$-ampleness of such CR line bundles.

The assumption is fulfilled for the following explicit example (Example \ref{ex_disc_bundles}): 
Let $\Sigma$ be a compact Riemann surface of genus $\geq 2$. Fix an identification of a universal covering $\widetilde{\Sigma} \simeq \D$ 
and regard $\pi_1(\Sigma) \simeq \Gamma < \Aut(\D)$ as a Fuchsian representation of $\Sigma$. 
Take a non-trivial quasiconformal deformation of $\Gamma$, say $\rho \colon \Gamma \to \Aut(\D)$. 
Set $\mathcal{D} \eqdef \widetilde{\Sigma} \times \D / (z, \zeta) \sim (\gamma z, \rho(\gamma)\zeta)$ for $\gamma \in \Gamma$.

One of other research directions of the analogue of Kodaira's embedding theorem is 
the problem concerning on projective embedding of compact laminations; 
we can find similar phenomenon in the work of Forn{\ae}ss and Wold \cite{fornaess-wold2011}*{Theorem 5.1} 
where they study compact $\Cont^1$ hyperbolic laminations. 
We also refer the reader to related works by Gromov \cite{gromov1999}*{pp.401--402}, 
Ghys \cite{ghys1997}*{\S 7}, Deroin \cite{deroin2008} and Mart{\'{\i}}nez Torres \cite{martinez-torres2011}. 

The organization of this paper is as follows. 
In \S 2, we introduce basic notions on Levi-flat CR manifolds. 
In \S 3, we recall and refine a classification result of holomorphic disc bundles 
with an emphasis on {\em Takeuchi 1-completeness} of certain holomorphic disc bundles. 
This notion is also known as $\log \delta$-pseudoconvexity \cite{brinkschulte2004}
or strong Oka property \cite{harrington-shaw2007}, 
and would be of interest from the viewpoint of confoliation. 
In \S 4, we state a variant of Bochner-Hartogs type extension theorem for CR sections. 
We give a self-contained proof for the reader's convenience.
In \S 5, we prove Main Theorem and pose some further questions.


\section{Levi-flat CR manifolds}
Let us recall basic notions briefly. For simplicity, we discuss under the assumption that 
manifolds and bundles have at least $\Cont^\infty$-smoothness. 

\subsection{In terms of foliation}
A {\em $\Cont^\infty$ Levi-flat CR manifold} is a triple $(M, \mathcal{F}, J)$ where 
$M$ is a $\Cont^\infty$ manifold, 
$\mathcal{F}$ is a $\Cont^\infty$ foliation on $M$ of real codimension one (the {\em Levi foliation} of $M$), 
and $J$ is a $\Cont^\infty$ section of $\End(T\mathcal{F})$ that induces a complex structure on each leaf, 
i.e., $J^2 = -\id$ and smooth sections of $T^{1,0} \eqdef \Ker (J - i\id) \subset \C \otimes T\mathcal{F} \subset \C \otimes TM$
are closed under the Lie bracket. 

The simplest example, which provides the local structure of Levi-flat CR manifolds, 
is $M = \C^{n-1} \times \R$ where 
$\mathcal{F}$ is given by its leaves $\{ \C^{n-1} \times \{t\} \}_{t \in \R}$ and 
$J$ is induced from the standard complex structure of $\C^{n-1}$. 
Any $\Cont^\infty$ Levi-flat CR manifold can be constructed by gluing some open sets of $\C^{n-1} \times \R$ together 
using leafwise holomorphic $\Cont^\infty$ maps.

We say a function $f \colon M \to \C$ is a {\em CR function} if it is leafwise holomorphic.

\subsection{In terms of CR geometry}
We will investigate Levi-flat CR manifolds in complex manifolds. 
The terminology of CR geometry is suitable for this purpose.

A {\em CR manifold} ({\em of hypersurface type}) is a pair $(M, T^{1,0})$ where $M$ is a $\Cont^\infty$ manifold 
of dimension $2n - 1$, and $T^{1,0}$ is a subbundle of $\C \otimes TM$ of $\rank_\C$ $n-1$
that satisfies $T^{1,0} \cap \overline{T^{1,0}} = 0$ and smooth sections of $T^{1,0}$ 
are closed under the Lie bracket. 
It models a real hypersurface $M$ in an $n$-dimensional complex manifold $(X, J_X)$; 
for such $M$ we can put $T^{1,0} \eqdef T^{1,0} X \cap \C TM \simeq $(the maximal $J_X$-invariant subspace of $TM$). 
Moreover, if the real hypersurface $M$ is given by a $\Cont^\infty$ defining function $r$, i.e., 
$r \colon M \subset U \to \R$ with $ M = \{ z \in U \mid r(z) = 0 \}$ and $dr \neq 0$ on $M$, 
we have $T^{1,0} = \Ker \d' r \subset T^{1,0} X$. 

We can redefine a $\Cont^\infty$ {\em Levi-flat CR manifold} as a $\Cont^\infty$ CR manifold $(M, T^{1,0})$ such that 
smooth sections of $T^{1,0} + \overline{T^{1,0}}$ are closed under the Lie bracket. 
The Levi foliation $\mathcal{F}$ is recovered by integrating the distribution $(T^{1,0} + \overline{T^{1,0}}) \cap TM$. 
In the case that $M$ is located in a complex manifold $X$ with defining function $r$, 
$M$ is Levi-flat if and only if the Levi form $i\d'\dbar r|T^{1,0} = 0$ as a quadratic form.  
This is the classical definition of {\em Levi-flat real hypersurface}.

We say a function $f \colon M \to \C$ is a {\em CR function} if it is annihilated by vectors of $T^{0,1} \eqdef \overline{T^{1,0}}$. 
If $M = \{ r = 0 \} \subset X$ and $f$ is of $\Cont^1$, 
it is equivalent to say that $\dbar \tilde{f}$ is proportional to $\dbar r$ on $M$ 
where $\tilde{f}$ is any $\Cont^1$ extension of $f$ on a neighborhood of $M$. 
In particular, the restriction of any holomorphic function defined near $M$ is CR. 
In the Levi-flat case, this definition agrees with the sense defined above.

\begin{Rem}
Only a few examples are known for {\em compact} Levi-flat real hypersurfaces. 
For instance, we cannot find such compact hypersurface in $\C^n$ 
since there is a strictly plurisubharmonic function $\sum_{i=1,\cdots,n} |z_i|^2$, 
whose restriction on each Levi leaf gives a strictly subharmonic function, and 
the maximum principle forbids the situation. 
The same reasoning implies no compact Levi-flat real hypersurface 
exists in Stein manifolds.
A famous still open conjecture is the non-existence of compact $\Cont^\infty$ Levi-flat 
real hypersurfaces in $\CP^2$. 
\end{Rem}

\subsection{CR line bundles}
Let $L$ be a $\Cont^\infty$ {\em CR line bundle} over a Levi-flat CR manifold $M$, that is 
a $\Cont^\infty$ $\C$-vector bundle of $\rank_\C$ 1 that possesses a trivialization cover 
whose transition functions are CR. 
Let $h$ be a $\Cont^\infty$ hermitian metric on $L$. 
We can find a connection on $L$ that equals to the Chern connection on $(L|N, h)$ along any leaf $N$.
We denote by $\Theta_h$ the curvature 2-form of the connection restricted along $T\mathcal{F}$, 
that is, $\Theta_h = - \d'_z \dbar_z \log h(z, t)$ where $(z, t) \colon M \supset U \to \C^{n-1} \times \R$
 is any foliated chart.
We say $L$ to be {\em positive along leaves} if there exists a hermitian metric $h$ on $L$ such that 
$i\Theta_h (\zeta, \overline{\zeta}) > 0$ for any non-zero $\zeta \in T^{1,0}$. 
If $M$ is three-dimensional, the existence of a positive-along-leaves CR line bundle over $M$
is equivalent to the tautness of its Levi foliation. (cf. \cite{martinez-torres2011}*{Lemma 1})


\section{Holomorphic disc bundles in flat ruled surfaces}
We recall a classification result on holomorphic disc bundles, 
with which a standard example of Levi-flat CR 3-manifolds associate, 
and supplement preceding results about pseudoconvexity of these spaces.

\subsection{Holomorphic disc bundles}
We begin by recalling a construction of holomorphic disc bundles.  
Let $\Sigma$ be a compact Riemann surface. 
A holomorphic fiber bundle over $\Sigma$ with fiber $\D \eqdef \{ \zeta \in \C \mid |\zeta| < 1 \}$ is 
called a {\em holomorphic disc bundle} over $\Sigma$. 
It can be easily seen that holomorphic trivializations form a flat trivializing cover, 
i.e., all of the transition functions are locally constant. 

Hence, any holomorphic disc bundle $\mathcal{D}$ can be obtained by the {\em suspension} construction: 
we can find a group homomorphism $\rho \colon \pi_1(\Sigma) \to \Aut(\D)$, called a {\em holonomy homomorphism}, 
giving a bundle isomorphism 
\begin{align*}
\mathcal{D} &\simeq \Sigma \times_\rho \D  \\
            &\eqdef \widetilde{\Sigma} \times \D / (z, \zeta) \sim (\gamma z, \rho(\gamma)\zeta) \text{ for $\gamma \in \pi_1(\Sigma)$}
\end{align*}
where $\widetilde{\Sigma}$ is a universal covering of $\Sigma$. We denote this disc bundle by $\mathcal{D}_\rho$.

The group $\Aut(\D)$ of biholomorphisms of $\D$ consists of M\"obius transformations 
preserving $\D$, acting on the Riemann sphere $\CP^1$ and fixing the unit circle $\bd \D$. 
Thus, it follows that a holomorphic disc bundle is canonically embedded in 
its associated flat ruled surface, say $\pi \colon X_\rho \eqdef \Sigma \timesrho \CP^1 \to \Sigma$, 
and the boundary of $\mathcal{D}_\rho$ in $X_\rho$, a flat circle bundle, 
becomes a compact $\Cont^\omega$ Levi-flat CR 3-manifold, 
say $M_\rho \eqdef \Sigma \timesrho \bd \D$. 
Note that $\mathcal{D}_\rho \to X \setminus \overline{\mathcal{D}_\rho}$, $(z, \zeta) \mapsto (z, 1/\overline{\zeta})$ 
is an anti-biholomorphism, which we call the {\em conjugation}.

\subsection{Classification}
Now we state a classification result of holomorphic disc bundles by means of harmonic sections, 
which is based on the works of Diederich and Ohsawa \cite{diederich-ohsawa1985}, \cite{diederich-ohsawa2007}.
\begin{Thm}
\label{diederich-ohsawa}
Let $\mathcal{D}$ be a holomorphic disc bundle over a compact Riemann surface $\Sigma$
and $M$ its associated Levi-flat CR 3-manifold. 
Then, one of the following cases occurs:
\begin{enumerate}
\item $\mathcal{D}$ admits a unique non-holomorphic harmonic section $h$ with $\rank_\R dh = 2$ on an open dense set.
\item $\mathcal{D}$ admits a unique locally non-constant holomorphic section.
\item $\mathcal{D}$ admits a unique harmonic section $h$ with $\rank_\R dh = 1$ on an open dense set.
\item $M$ admits one or two locally constant section(s).
\item $\mathcal{D}$ admits a locally constant section.
\end{enumerate}
\end{Thm}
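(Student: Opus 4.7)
My plan is to parametrize holomorphic disc bundles by their holonomy $\rho \colon \pi_1(\Sigma) \to \Aut(\D)$, exploiting the bijection between sections of $\mathcal{D}_\rho$ and $\rho$-equivariant maps $\widetilde{\Sigma} \to \D$. Since the Poincar\'e metric on $\D$ is $\Aut(\D)$-invariant, it descends to a leafwise complete metric of constant negative curvature on the fibres of $\mathcal{D}_\rho \to \Sigma$, so that harmonic sections make sense as descents of $\rho$-equivariant harmonic maps.

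First I would dispose of the elementary cases via the trichotomy of M\"obius transformations. If the group $\rho(\pi_1(\Sigma))$ has a common fixed point in $\D$, that point defines a locally constant section of $\mathcal{D}$, yielding case (v); if it has no common interior fixed point but has one or two common fixed points on $\bd\D$ (the parabolic or hyperbolic cases), those boundary fixed points produce one or two locally constant sections of $M$, yielding case (iv).

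When $\rho$ is non-elementary, the equivariant Eells--Sampson/Donaldson existence theorem produces a $\rho$-equivariant harmonic map $h \colon \widetilde{\Sigma} \to \D$ which descends to a harmonic section of $\mathcal{D}$. Uniqueness follows from strict negative curvature of the target, since any one-parameter family of equivariant harmonic deformations would produce a bounded Jacobi field along $h$, forced to vanish. I would then analyse $h$ through its Hopf differential $\phi(h) = \langle \partial h, \partial h \rangle\, dz^2$, which is a holomorphic quadratic differential on $\Sigma$ by harmonicity. If $\phi(h) \equiv 0$, then $h$ is conformal, hence $\pm$holomorphic by the classical rigidity of conformal harmonic maps into K\"ahler surfaces, giving case (ii) (possibly after composing with the fibrewise conjugation). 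Otherwise the pointwise relations $|\partial h| \cdot |\dbar h| \propto |\phi(h)|$ and $\det_\R dh = |\partial h|^2 - |\dbar h|^2$ yield either $\rank_\R dh = 2$ on the complement of the isolated zero set of $\phi(h)$ (case (i)), or $|\partial h| \equiv |\dbar h|$ and $\rank_\R dh = 1$ on an open dense set (case (iii)).

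The main technical obstacle is this final rank dichotomy together with the uniqueness assertions in cases (i)--(iii). I would carry it out following the analysis of Diederich--Ohsawa \cite{diederich-ohsawa1985}, \cite{diederich-ohsawa2007}, with the refinement the paper contributes concerning the behaviour near the Levi-flat hypersurface $M$ to be developed in the subsequent subsection on Takeuchi 1-completeness.
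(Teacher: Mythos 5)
Your strategy is in the right circle of ideas, but the case division has a genuine hole, and it sits exactly where case (iii) lives. Elementary subgroups of $\Aut(\D)$ need not fix a point of $\overline{\D}$: the setwise stabilizer of a geodesic containing elements that swap its two endpoints (e.g.\ a hyperbolic element together with an elliptic involution reversing its axis, which is the paper's Example \ref{ex_disc_bundles}(iii)) has no common fixed point in $\D$ nor on $\bd \D$, so it is covered neither by your two ``elementary'' buckets nor by the equivariant Eells--Sampson/Donaldson theorem as you invoke it (``when $\rho$ is non-elementary''). Worse, case (iii) cannot arise inside your non-elementary branch at all: if the Jacobian of an equivariant harmonic map vanishes identically, its image lies in a single geodesic, and equivariance then forces $\rho(\pi_1(\Sigma))$ to preserve that geodesic, i.e.\ $\rho$ is elementary. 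So, as written, your argument never produces case (iii), and the holonomies that do produce it are not treated. The dichotomy that works --- and the one the paper takes from \cite{diederich-ohsawa1985}*{Theorem 2} --- is: either $\rho$ fixes a point of $\bd\D$, in which case $M$ has one or two locally constant sections (case (iv), cf.\ \cite{diederich-ohsawa2007}*{Proposition 1.1}), or it does not, in which case an equivariant harmonic map exists; the paper then quotes Sampson \cite{Sampson1978}*{Theorem 3} to get that $\rank_\R dh$ is constant on an open dense set and sorts rank $0$, $1$, $2$ into (v), (iii), and ((i) or (ii)) respectively. Note in this connection that your rank dichotomy is not immediate from the pointwise identities you list: away from the zeros of the Hopf differential both $\partial h$ and $\dbar h$ are nonzero, but you still need that the Jacobian $|\partial h|^2-|\dbar h|^2$ either vanishes identically or has isolated zeros, which is precisely the Sampson-type input you defer.

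A smaller but real misstep: an antiholomorphic rank-two harmonic section cannot be converted into a holomorphic section of $\mathcal{D}$ ``by composing with the fibrewise conjugation,'' because the conjugation is an anti-biholomorphism of $\mathcal{D}$ onto $X \setminus \overline{\mathcal{D}}$, not an automorphism of $\mathcal{D}$. In Theorem \ref{diederich-ohsawa} the rank-two case is split into (ii) or (i) solely according to whether $h$ is holomorphic or not, so the antiholomorphic case belongs to (i); your rerouting of it into (ii) is not available. Also be aware that your Jacobi-field uniqueness argument is clean for rank-two images but needs extra care in case (iii), where the image is a geodesic and the target is flat along it.
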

Here a section is said to be {\em harmonic} if it is lifted to a $\rho$-equivariant harmonic map 
$\tilde{h} \colon\widetilde{\Sigma} \to \D$ where $\D$ is equipped with the Poincar\'e metric, and 
a section is said to be {\em locally constant} if it is locally constant in the (flat) trivializing coordinates. 

\begin{proof}[Proof of Theorem \ref{diederich-ohsawa}]
By \cite{diederich-ohsawa1985}*{Theorem 2}, we know that there exists either a harmonic section $h$ of $\mathcal{D}$, 
or a locally constant section of $M$. In the latter case, by examining possible holonomy homomorphisms, we know that 
there are at most two locally constant sections, which is the case (iv). (cf. \cite{diederich-ohsawa2007}*{Proposition 1.1})

Now we suppose the former case, the existence of such $h$. 
By applying a theorem of Sampson \cite{Sampson1978}*{Theorem 3} to the lift of $h$, 
we know that $\rank_\R\, dh$ is constant on an open dense subset of $\Sigma$. 
If the rank is zero or one, it is of the case (v) or (iii) respectively. 
The remaining case, where the rank is two, is classified into (ii) or (i) based on whether $h$ is holomorphic or not. 
\end{proof}

\begin{Ex}
\label{ex_disc_bundles}
We describe examples of each case in terms of holonomy homomorphism. 
\begin{enumerate}
\item Let $\Sigma$ be of genus $\geq 2$. Fix an identification $\widetilde{\Sigma} \simeq \D$ 
and regard $\pi_1(\Sigma) \simeq \Gamma < \Aut(\D)$ as a Fuchsian representation of $\Sigma$. 
Take a non-trivial quasiconformal deformation of $\Gamma$, say $\rho \colon \Gamma \to \Aut(\D)$. 
Then $\mathcal{D}_\rho$ is of the case (i). 
The unique harmonic section corresponds to the graph of the unique harmonic diffeomorphism $\Sigma = \D/\Gamma \to \D/\rho(\Gamma)$. 
\item Let $\Sigma$ and $\Gamma$ as above, and 
$\rho = \id \colon \Gamma \to \Gamma \subset \Aut(\D)$.
Then, $\mathcal{D}_\rho$ is of the case (ii). 
Its associated holomorphic section is obtained by the quotient of the diagonal set $\Delta \subset \widetilde{\Sigma} \times \D = \D \times \D$.
\item Let $\rho$ be a homomorphism from $\pi_1(\Sigma)$ to a subgroup of $\Aut(\D)$ generated by 
a hyperbolic element $H$ and an elliptic element of order two which reverses the axis of $H$. 
Then, $\mathcal{D}_\rho$ is of the case (iii). The image of the unique harmonic section corresponds to the axis of $H$.
\item Let $\rho$ be a homomorphism from $\pi_1(\Sigma)$ to an abelian subgroup of $\Aut(\D)$ 
that consists of parabolic (resp. hyperbolic) elements with common fixed point(s) on $\bd \D$. 
Then, $\mathcal{D}_\rho$ is of the case (iv). The locally constant section(s) correspond(s) to 
the suspension of the fixed point(s). 
\item Let $\rho$ be a homomorphism from $\pi_1(\Sigma)$ to an abelian subgroup of $\Aut(\D)$ 
that consists of elliptic elements with a common fixed point in $\D$, 
which is just isomorphic to the group of rotations ${\mathrm U}(1)$. 
Then, $\mathcal{D}_\rho$ is of the case (v). The suspension of the fixed point gives a locally constant section.
\end{enumerate}
Note that for the cases (iii)--(v), the examples above exhaust the cases, respectively.
\end{Ex}

\subsection{Pseudoconvexity} 
Dynamical complexity of the Levi foliations of the flat circle bundles is directly encoded in their holonomy homomorphisms. 
On the other hand, it is indirectly reflected on pseudoconvexity of the holomorphic disc bundles bounded by the flat circle bundles.

Known facts on the pseudoconvexity of a holomorphic disc bundle, say $\mathcal{D}$, over a compact Riemann surface 
is summarized as follows:
\begin{itemize}
\item In all the cases, $\mathcal{D}$ is weakly 1-complete (\cite{diederich-ohsawa1985}*{Theorem 1}).
\item In the cases (i)--(iv), $\mathcal{D}$ is 1-convex; it is particularly Stein 
if and only if it is of the cases (i), (iii) and (iv) (\cite{barrett1992}*{Theorem 2}).
\item In the cases (i) and (ii), $\mathcal{D}$ is Takeuchi 1-convex (\cite{diederich-ohsawa2007}*{Proposition 1.6}\footnote{
Its proof seems to contain some errors. 
}).
\end{itemize}

We will give a supplemental result for the case (i) using the following notion.

\begin{Def}[Takeuchi $q$-complete space]
Let $X$ be a complex manifold of dimension $n$ and $D$ a relatively compact domain in $X$ 
with $\Cont^2$ boundary. 
$D$ is said to be {\em Takeuchi $q$-complete} if there exists a $\Cont^2$ defining function $r$ of 
$\bd D$ defined on a neighborhood of $D$ with $D = \{ z \mid r(z) < 0 \}$ such that, 
with respect to a hermitian metric on $X$, 
at least $n - q + 1$ eigenvalues of the Levi form of $-\log(-r)$ are greater than 1 entire on $D$.
\end{Def}

This notion originates in the work of Takeuchi \cite{takeuchi1964} 
where he showed any proper locally pseudoconvex domain in $\CP^n$ acquires this property for $q = 1$. 
Although it has already had other names, $\log \delta$-pseudoconvexity in \cite{brinkschulte2004}, 
and the strong Oka condition in \cite{harrington-shaw2007}, 
we name it again in consideration of consistency with the terminologies 
such as $q$-convexity and $q$-completeness, and Takeuchi $q$-convexity in \cite{diederich-ohsawa2007}.

Takeuchi 1-completeness not only implies that the domain is Stein, 
but also implies that it behaves as if it is in complex Euclidean space: 
\begin{Thm}[\cite{ohsawa-sibony1998}*{Theorem 1.1}]
\label{completeness}
Let $D$ be a Takeuchi 1-complete domain with defining function $r$. 
Then, $-i\d' \dbar \log (-r)$ gives a complete K\"ahler metric on $D$, 
and it follows that $-(-r)^{t_0}$ with sufficiently small $t_0 > 0$ becomes 
a strictly plurisubharmonic bounded exhaustion function on $D$, 
i.e., $D$ is hyperconvex.
\end{Thm}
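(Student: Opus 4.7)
The plan is to establish the three assertions---Kähler, completeness, and the existence of a bounded strictly plurisubharmonic exhaustion---in sequence, each reducing to a calculation involving the potential $\phi := -\log(-r)$ and the Takeuchi 1-complete hypothesis, which gives the $(1,1)$-form inequality $\omega := i\d'\dbar\phi \geq g_0$, with $g_0$ the background Hermitian metric on $X$.

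The Kähler claim is immediate: $\omega = -i\d'\dbar\log(-r)$ is $d$-exact hence closed, and positive by hypothesis. For the other two assertions, the crucial analytic input is a uniform bound
\begin{equation*}
M := \sup_{D}|\d'\phi|^{2}_{\omega} < +\infty.
\end{equation*}
I would extract it from the identity
\begin{equation*}
i\d'\dbar\phi = i\d'\phi\wedge\dbar\phi + \frac{i\d'\dbar r}{-r}, \qquad w := -r,
\end{equation*}
obtained by straightforward differentiation from $\d'\phi = \d' r/(-r)$. On a boundary collar the error term $i\d'\dbar r/(-r)$ is of order $1/w$ with bounded coefficient, while $i\d'\phi\wedge\dbar\phi$ is of order $1/w^{2}$ in its rank-one direction, so $\omega$ is asymptotic to $i\d'\phi\wedge\dbar\phi$ in that direction; evaluating the $\omega$-norm of $\d'\phi$ against this asymptotic behavior forces $|\d'\phi|^{2}_{\omega}\to 1$ at $\bd D$, and finiteness on compacta gives the global supremum $M$.

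Granted this bound, completeness follows from a standard Donnelly-Fefferman type estimate: for any curve $\gamma$ in $D$,
\begin{equation*}
|\phi(\gamma(b))-\phi(\gamma(a))| \leq \int|d\phi(\dot\gamma)|\,dt \leq \sqrt{2M}\int|\dot\gamma|_{\omega}\,dt,
\end{equation*}
so divergent paths, along which $\phi\to+\infty$, accumulate infinite $\omega$-length. For the hyperconvex exhaustion I would put $\psi := -(-r)^{t_{0}} = -e^{-t_{0}\phi}$; boundedness and vanishing at $\bd D$ are immediate, and the chain rule gives
\begin{equation*}
i\d'\dbar\psi = t_{0}\,e^{-t_{0}\phi}\bigl(\omega - t_{0}\, i\d'\phi\wedge\dbar\phi\bigr),
\end{equation*}
which is positive definite whenever $t_{0}\,|\d'\phi|^{2}_{\omega} < 1$ pointwise, hence for any $t_{0} < 1/M$.

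The main obstacle is exactly the supremum bound $M < \infty$. The Takeuchi condition controls eigenvalues of $\omega$ from below with respect to $g_0$ but does not a priori forbid $|\d'\phi|^{2}_{\omega}$ from blowing up at $\bd D$; extracting the bound requires a careful asymptotic estimate showing that the rank-one piece $i\d'\phi\wedge\dbar\phi$ saturates $\omega$ in its privileged direction up to a lower-order perturbation from $i\d'\dbar r/(-r)$, so that the pairing of $(\d'\phi)^{\#}$ against $\omega^{-1}$ stays bounded. This is where the $\Cont^{2}$-regularity of $r$ (rather than merely its existence) is indispensable, and it is the technical core of the argument.
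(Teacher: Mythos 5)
You should first be aware that the paper offers no proof of this statement: it is quoted as Theorem 1.1 of Ohsawa--Sibony \cite{ohsawa-sibony1998}, so there is no internal argument to compare against. Judged on its own, your skeleton is the standard one: everything is made to hinge on the single estimate $M=\sup_D|\partial\phi|^2_\omega<\infty$, from which your completeness argument and the identity $i\partial\overline{\partial}(-e^{-t_0\phi})=t_0e^{-t_0\phi}\bigl(\omega-t_0\,i\partial\phi\wedge\overline{\partial}\phi\bigr)$ correctly yield the last two assertions for $t_0<1/M$. The problem is that your justification of $M<\infty$ --- which you rightly identify as the only nontrivial step --- does not work as described.

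You argue that $i\partial\overline{\partial}r/(-r)$ is of order $1/w$ while $i\partial\phi\wedge\overline{\partial}\phi$ is of order $1/w^2$ in its privileged direction, so the rank-one piece saturates $\omega$ and $|\partial\phi|^2_\omega\to 1$. That comparison is valid only along the complex normal direction. Take instead $\xi=\xi_T+\lambda\xi_N$ with $\xi_T$ complex-tangential, $|\xi_T|=1$, and $|\lambda|\sim\sqrt{w}$: then $(i\partial\phi\wedge\overline{\partial}\phi)(\xi,\bar\xi)=|\partial r(\xi)|^2/w^2\sim 1/w$, while $(i\partial\overline{\partial}r)(\xi,\bar\xi)/w$ can also be of size $C/w$ through the off-diagonal term $2\,\mathrm{Re}\,(i\partial\overline{\partial}r)(\xi_T,\overline{\lambda\xi_N})/w$; the two competing terms are of the \emph{same} order, and nothing saturates. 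The bounds actually available from the hypotheses --- $\omega\geq g_0$ and $i\partial\overline{\partial}r\geq -Cg_0$ --- only give $|\partial\phi|^2_\omega\leq 1+C/w$, which blows up at $\bd D$, and this also destroys your completeness argument (with $|\partial\phi|^2_\omega\lesssim e^{\phi}$ the path integral $\int e^{-\phi/2}\,d\phi$ converges). Worse, the estimate $M<\infty$ is \emph{equivalent} to the statement being proved: if $-(-r)^{t}$ is plurisubharmonic then applying the standard inequality $i\partial u\wedge\overline{\partial}u\leq i\partial\overline{\partial}u$ to $u=-\log\bigl((-r)^{t}\bigr)=t\phi$ gives $|\partial\phi|^2_\omega\leq 1/t$, and conversely. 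So your proposal reduces the theorem to an equivalent assertion and omits its proof; the actual content of Ohsawa--Sibony's argument is precisely the control of the mixed normal--tangential directions (e.g.\ by exploiting the Takeuchi inequality on vectors $\xi_T+\lambda\xi_N$ for all $\lambda$ to bound $(i\partial\overline{\partial}r)(\xi_T,\bar\xi_N)$), and that piece is missing here.
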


\begin{Rem}
From the viewpoint of confoliation \cite{eliashberg-thurston1998}*{Corollary 1.1.10}, 
we can translate a question on 
various strong pseudoconvexity of the complement of a Levi-flat real hypersurface 
into one 
on approximation of a foliation by contact structures. 
For example, suppose a compact Levi-flat real hypersurface $M$ has a Takeuchi 1-convex complement with defining function $r$. 
For small positive $\varepsilon$, the level sets $\{r = -\varepsilon \}$ are diffeomorphic to $M$ and 
possess contact structures induced from the strictly pseudoconvex CR structures. 
Thus, the family of the level sets defines a ``uniform'' contact deformation of the Levi foliation. 
Here ``uniform'' means that convergence to the foliation is {\em exactly} the same order entire on $M$. 
\end{Rem}

\subsection{Takeuchi 1-complete case}
\begin{Prop}
\label{takeuchi}
Let $\mathcal{D}$ be a holomorphic disc bundle over a compact Riemann surface $\Sigma$ 
with a uniquely determined non-holomorphic harmonic section $h$ with $\rank_\R dh = 2$ on an open dense set. 
Then, $\mathcal{D}$ is Takeuchi 1-complete in its associated ruled surface $X$.
\end{Prop}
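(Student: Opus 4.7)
The plan is to verify that the natural candidate
\[
r(z,\zeta) \;=\; -v(z,\zeta), \qquad v(z,\zeta) \;=\; \frac{(1-|\tilde{h}(z)|^2)(1-|\zeta|^2)}{|1-\overline{\tilde{h}(z)}\,\zeta|^2} \;=\; 1 - \bigl|\phi_{\tilde{h}(z)}(\zeta)\bigr|^2,
\]
pulled up to the universal cover $\widetilde{\Sigma}\times\D$, realizes the Takeuchi 1-complete condition. Here $\phi_w(\zeta)=(\zeta-w)/(1-\bar w\zeta)$ denotes the M\"obius transformation sending $w$ to $0$; since $|\phi_w(\zeta)|^2$ is $\Aut(\D)$-invariant under the diagonal action $(w,\zeta)\mapsto(\gamma w,\gamma\zeta)$, the function $v$ descends to $\mathcal{D}_\rho$, and $r=-v$ extends smoothly across $M=\bd \mathcal{D}$ to a neighborhood of $\overline{\mathcal{D}}$ in $X$ (away from the conjugate harmonic section), giving a genuine defining function.

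The core computation is $i\partial\bar\partial(-\log v)$. M\"obius-invariance of $v$ under the fiberwise $\Aut(\D)$-action allows me to normalize $\tilde{h}(z_0)=0$ at any prescribed point $z_0$. Direct differentiation, combined with the harmonic-map equation for the Poincar\'e metric
\[
\tilde{h}_{z\bar z} \;+\; \frac{2\,\overline{\tilde{h}}}{1-|\tilde{h}|^2}\,\tilde{h}_z\,\tilde{h}_{\bar z} \;=\; 0,
\]
yields the Hermitian matrix
\[
L \;=\; \begin{pmatrix} (|a|^2+|b|^2)-2\Re(\bar a\bar b\,\zeta^2) & -a \\ -\bar a & (1-|\zeta|^2)^{-2} \end{pmatrix}, \qquad a=\tilde{h}_z(z_0),\ b=\tilde{h}_{\bar z}(z_0).
\]
Its determinant simplifies to the manifestly non-negative expression
\[
\det L \;=\; \frac{|b-\bar a\,\zeta^2|^2 + 2|a|^2|\zeta|^2(1-|\zeta|^2)}{(1-|\zeta|^2)^2},
\]
so $-\log v$ is already plurisubharmonic on $\mathcal{D}$.

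For Takeuchi 1-completeness I need both eigenvalues of $L$ to exceed $1$ with respect to some Hermitian metric $\omega$ on $X$, uniformly on $\mathcal{D}$. The determinant vanishes precisely on the union of (i) the image $h(\Sigma)$ at points where the Hopf differential of $\tilde{h}$ vanishes (i.e.\ $\tilde{h}_{\bar z}=0$), and (ii) fibers over the rank-zero locus of $\tilde{h}$; by the hypothesis on $h$ both loci are nowhere dense in $\mathcal{D}$. To upgrade semi-definiteness to uniform strict positivity I would replace $r$ by $\tilde r := r\cdot e^{-\varepsilon\psi}$ for small $\varepsilon>0$, where $\psi$ is a bounded smooth strictly plurisubharmonic function on $\overline{\mathcal{D}}$. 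Then $-\log(-\tilde r)=-\log v+\varepsilon\psi$ has Levi form $L+\varepsilon\, i\partial\bar\partial\psi$, which is uniformly strictly positive; rescaling $\omega$ makes both eigenvalues exceed $1$. The construction of $\psi$ invokes that $\mathcal{D}$ is Stein (Barrett 1992, Theorem 2): a Stein exhaustion is truncated via a suitable concave function, a local strictly PSH bump is placed on a neighborhood of the compact degenerate locus, and the pieces are combined through Richberg-type regularization.

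\emph{Main obstacle.} The technical heart is producing the auxiliary $\psi$ with both (a)~bounded range and smooth extension to $\partial\mathcal{D}$ and (b)~strict plurisubharmonicity in a neighborhood of the degenerate locus of $L$. Stein-ness alone provides only an unbounded PSH exhaustion of $\mathcal{D}$, so one must exploit the precise structure near the Levi-flat boundary~$M$ — tautness of its foliation together with the uniqueness of $\tilde h$ — to control the accumulation of the degenerate set toward $M$ and to patch the interior bump with a boundary model (for instance the Diederich--Ohsawa Takeuchi 1-convex defining function near $\partial\mathcal{D}$) in a manner compatible with the required uniform estimate.
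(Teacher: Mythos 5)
Your first half coincides with the paper's argument: you use the same defining function $r_0=-v$, the same fiberwise M\"obius normalization $\tilde h(z_0)=0$, and your determinant identity for the Levi form of $-\log v$ is correct. The gap is in the correction step. The auxiliary function your main argument relies on --- a smooth $\psi$ on $\overline{\mathcal D}$ that is strictly plurisubharmonic up to and including $\partial\mathcal D$, so that $L+\varepsilon\, i\partial\dbar\psi$ is uniformly positive --- does not exist: $M=\partial\mathcal D$ is a compact Levi-flat hypersurface, so $\psi|_M$ would be strictly subharmonic along every leaf, and at a point where $\psi|_M$ attains its maximum on the compact $M$ the maximum principle on the (boundaryless) leaf through that point gives a contradiction. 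No Richberg regularization, interior bump, or truncated Stein exhaustion circumvents this; the non-existence of such a $\psi$ is exactly what makes the proposition delicate. Moreover, your localization of the trouble to a \emph{compact} degenerate locus is too optimistic: Takeuchi $1$-completeness requires a lower bound on the eigenvalues with respect to a fixed hermitian metric of $X$, uniformly on $\mathcal D$, and since the trace of $L$ grows like $(1-|\zeta|^2)^{-2}$, the smaller eigenvalue is comparable to $(1-|\zeta|^2)^2\det L$, which by your own formula tends to $|b-\overline a\,\zeta^2|^2$ as $|\zeta|\to 1$; it therefore decays to $0$ near suitable boundary points of every fiber over the locus $\{|h_z|=|h_{\overline z}|\}$ and along the closures of rank-zero fibers, even though $\det L>0$ there in the interior. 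The set where the correction must act thus reaches $M$, and a bump near a compact interior set plus a merely plurisubharmonic remainder cannot yield the required uniform estimate.

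The paper's resolution, which you could adopt, needs no strictly plurisubharmonic function on $\mathcal D$ at all: it takes $r=r_0e^{-\psi}$ with $\psi=c\psi_1$ pulled back from the base, $\psi_1\colon\Sigma\to\R$. Then $\psi_{z\overline z}$ enters the determinant with the same factor $(1-|\zeta|^2)^{-2}$ as the degenerating term, so the gain is automatically uniform in the fiber variable, and the smaller eigenvalue is bounded below by $\frac{1}{1+C}\bigl(\psi_{z\overline z}+\frac{(1-\delta)^2}{4}\min\{|h_{\overline z}|^2,(|h_z|-|h_{\overline z}|)^2\}\bigr)$, independently of $\zeta$. One chooses $\psi_1$ strictly subharmonic on $\Sigma\setminus\overline W$ (an open Riemann surface, hence carrying a strictly subharmonic exhaustion), where $W\Subset V$ and $V$ is a nonempty open set on which both $|h_{\overline z}|$ and $|h_z|-|h_{\overline z}|$ are bounded away from $0$; the paper secures the existence of $V$ via Lemma \ref{isolated} (finiteness of the zeros of $h_{\overline z}$, from the holomorphic Hopf differential). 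On $V$ the min-term absorbs the possibly negative $\psi_{z\overline z}$ after scaling by small $c$. Note that the same maximum-principle obstruction reappears downstairs --- there is no strictly subharmonic function on the compact $\Sigma$ --- and the removal of $W$ is precisely the workaround; it is this compensation mechanism, with control uniform all the way up to the Levi-flat boundary, that your proposal is missing.
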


\begin{proof}
Fix a finite open covering $\{U_\nu\}$ of $\Sigma$ giving trivializations of $\mathcal{D}$.
Set $\delta = \max_\nu \sup_{U_\nu} |h| < 1$ where the value of $h$ is taken with respect to the trivializing coordinate over each $U_\nu$.
It suffices to find a defining function $r$ of $\bd \mathcal{D}$ so that 
the eigenvalues of the complex Hessian of $-\log (-r)$ in each trivializing coordinate $(z, \zeta) \colon \pi^{-1}(U_\nu) \to \C^2 $ 
are bounded from below by a positive constant, 
since we can easily find a hermitian metric on $X$ 
that is comparable to $i (dz d\overline{z} + d\zeta d\overline{\zeta})$
by usual ``partition of unity'' argument.

We will find the desired $r$ in the form $r = r_0 e^{-\psi}$ 
where $r_0$ is the defining function of $\bd \mathcal{D}$ used in \cite{diederich-ohsawa1985}, 
and $\psi \colon \Sigma \to \R$ will be determined later. 
Recall the original defining function 
\[
r_0(z, \zeta) \eqdef \left|\frac{\zeta - h(z)}{1 - \overline{h(z)}\zeta}\right|^2 - 1 
\]
where $(z, \zeta)$ is any trivializing coordinate. 
It is clearly well-defined since the term inside the modulus is 
just a M\"obius transformation that maps $h(z)$ to $0$ and 
remaining choices of the fiber coordinate are only up to rotations. 

Take one of the trivializations, say $(z, \zeta) \colon \pi^{-1}(U_\nu) \to \C^2$. The Levi form is 
\begin{align*}
& i \d'\dbar (- \log(-r)) \\
& = i \d'\dbar \left( \psi - \log (1 - |\zeta|^2) - \log (1 - |h|^2) + 2 \mathrm{Re} \log (1 - \overline{h}\zeta) \right)\\
&= \left(\psi_{z\overline{z}} + (1 - |\zeta|^2) (|h_z|^2 + |h_{\overline{z}}|^2) + \frac{|\zeta - h |^2}{1 -|h|^2} | h_z - e^{2i \theta(z, \zeta)} \overline{h_{\overline{z}}} |^2 \right) 
    \frac{ idz \wedge d\overline{z} }{ |1 - \overline{h}\zeta|^2 (1 - |h|^2) } \\
&\qquad - h_z \frac{ i dz \wedge d\overline{\zeta} }{(1 - h\overline{\zeta})^2} 
    - \overline{h_z} \frac{ i d\zeta \wedge d\overline{z} }{(1 - \overline{h}\zeta)^2} 
    + \frac{i d\zeta \wedge d\overline{\zeta}}{(1 - |\zeta|^2)^2} 
\end{align*}
where $\theta(z, \zeta) \eqdef \arg (\zeta - h)/(1 - \overline{h}\zeta)$ and all the values on $h$ and $\psi$ are taken at $z$. 
We can check it by direct computation in three steps:
\begin{enumerate}
\item Fix $z_0 \in U$ in the trivialization. Choose a temporal trivializing coordinate $(z, \zeta^\natural)$ with $h^\natural(z_0) = 0$. 
\item Compute the Levi form on the fiber $\mathcal{D}_{z_0}$ in $(z, \zeta^\natural)$ coordinate. 
Note that the harmonicity of $h$ yields $h_{z\overline{z}}(z_0) = 0$.
\item Pull back the form to $(z, \zeta)$ coordinate.
\end{enumerate}

Now we are going to estimate the eigenvalues of the complex Hessian. 
The trace and determinant of the complex Hessian of $-\log (-r)$ are estimated as 
\begin{flalign*}
& (\text{trace of the complex Hessian}) &\\
& =     \frac{1}{(1 - |\zeta|^2)^2}  + \frac{ \psi_{z\overline{z}} + (1 - |\zeta|^2) (1 - |h|^2) (|h_z|^2 + |h_{\overline{z}}|^2) + |\zeta - h |^2 | h_z - e^{2i \theta} \overline{h_{\overline{z}}} |^2 }{ |1 - \overline{h}\zeta|^2 (1 - |h|^2)^2 } &\\
& \leq  \frac{1}{(1 - |\zeta|^2)^2} + \frac{\psi_{z\overline{z}} + (1 - |\zeta|^2) (|h_z|^2 + |h_{\overline{z}}|^2) + |\zeta - h |^2 | h_z - e^{2i \theta} \overline{h_{\overline{z}}} |^2 }{(1 - \delta)^{4} } &\\
& \leq  \frac{1}{(1 - |\zeta|^2)^2} + \frac{\psi_{z\overline{z}} + (1 - |\zeta|^2 + 2|\zeta - h |^2 ) (|h_z|^2 + |h_{\overline{z}}|^2) }{(1 - \delta)^{4} } &\\
& \leq  \frac{1}{(1 - |\zeta|^2)^2} + \frac{\psi_{z\overline{z}} + 8 (|h_z|^2 + |h_{\overline{z}}|^2) }{(1 - \delta)^{4} } &\\
& \leq  \frac{1}{(1 - |\zeta|^2)^2} + \sup_U \frac{\psi_{z\overline{z}} + 8 (|h_z|^2 + |h_{\overline{z}}|^2) }{(1 - \delta)^{4} } =\mathrel{\mathop{:}} \frac{1}{(1 - |\zeta|^2)^2} + C. &\\
\end{flalign*}
\begin{flalign*}
& (\text{determinant of the complex Hessian}) & \\ 
& = \frac{\psi_{z\overline{z}}}{(1 - |\zeta|^2)^2} +  \frac{1}{(1 - |\zeta|^2)^2} \left(  \frac{|\zeta - h|^2 | h_z - e^{2i\theta}\overline{h_{\overline{z}}}|^2}{|1 - \overline{h}\zeta|^2 (1 - |h|^2)^2} \right) & \\
& \qquad + \frac{1}{1 - |\zeta|^2} \left( \frac{|h_{\overline{z}}|^2}{| 1 - \overline{h}\zeta|^2 (1 - |h|^2) } + \frac{|\zeta - h|^2 |h_z|^2 }{|1 - \overline{h}\zeta|^4 (1 - |h|^2) } \right) & \\
   &\geq \frac{\psi_{z\overline{z}}}{(1 - |\zeta|^2)^2} +  \frac{|\zeta - h|^2 | h_z - e^{2i\theta}\overline{h_{\overline{z}}}|^2 }{4(1 - |\zeta|^2)^2} + \frac{|h_{\overline{z}}|^2}{4(1 - |\zeta|^2)} + \frac{|\zeta - h|^2|h_z|^2 }{16(1 - |\zeta|^2)} & \\
   &\geq \frac{\psi_{z\overline{z}}}{(1 - |\zeta|^2)^2} +  \frac{ |\zeta - h|^2 (| h_z | - |h_{\overline{z}}|)^2 + (1 - |\zeta|^2) |h_{\overline{z}}|^2 }{4(1 - |\zeta|^2)^2}& \\
   &\geq \frac{\psi_{z\overline{z}}}{(1 - |\zeta|^2)^2} +  \frac{ (|\zeta - h|^2 + 1 - |\zeta|^2) \min\{(| h_z | - |h_{\overline{z}}|)^2, |h_{\overline{z}}|^2 \}}{4(1 - |\zeta|^2)^2} &\\
   &\geq \frac{\psi_{z\overline{z}}}{(1 - |\zeta|^2)^2} +  \frac{ (1 - \delta)^2 \min\{(| h_z | - |h_{\overline{z}}|)^2, |h_{\overline{z}}|^2 \}}{4(1 - |\zeta|^2)^2}. &\\
\end{flalign*}
We will set $\psi$ to have sufficiently small range so that the trace is positive, 
in which situation the smaller eigenvalue $\lambda$ of the complex Hessian of $-\log (-r)$ is estimated as 
\begin{align*}
\lambda 
&=   \frac{\text{trace}}{2} - \sqrt{ \frac{\text{trace}}{2} - \det } \geq  \frac{\det}{\text{trace}} \\
& \geq  \frac{1}{1 + C}  \left(\psi_{z\overline{z}} + \frac{(1 - \delta)^2}{4} \min \{ |h_{\overline{z}}|^2, (|h_z| - |h_{\overline{z}}|)^2 \} \right).
\end{align*}
Note that this estimate does not depend on $\zeta$, and 
is sharp in the sense that 
the smaller eigenvalue of the complex Hessian of $-\log (-r_0)$, which corresponds to the second term in the estimate, 
actually equals to 0 at $(z, 0)$ if $h_{\overline{z}} (z) = 0$ and 
tends to 0 near some points of $\bd \mathcal{D}_{z}$ if $|h_z(z)| = |h_{\overline{z}}(z)|$, 
which facts can be deduced from the explicit formula of the Levi form. 
This situation leads us to modify $r_0$ with $\psi$ strictly subharmonic on such locus in $\Sigma$.

From Lemma \ref{isolated} below and the assumption on rank of $dh$, we can find a non-empty relatively compact set $V \subset \Sigma$ on which both $|h_{\overline{z}}|$ and $|h_z| - |h_{\overline{z}}|$ never vanish. 
Removing a relatively compact $W \subset V$ from $\Sigma$, we obtain an open Riemann surface $\Sigma \setminus \overline{W}$, 
which carries a strictly subharmonic exhaustion function $\psi_0$. 
We extend $\psi_0 | \Sigma \setminus V$ to $\Sigma$ so as to vanish on $W$, say $\psi_1$. 
We take $0 < c \ll 1$ for $\psi \eqdef c \psi_1$ to satisfy, in the all of the trivializing coordinates, 
$\psi_{z\overline{z}}(1 - \delta)^{-4} > -1$, and 
\[
\psi_{z\overline{z}} + \frac{(1 - \delta)^2}{4} \min \{ |h_{\overline{z}}|^2, (|h_z| - |h_{\overline{z}}|)^2\} > 0 \text{\quad on $V$.}
\]
Using this $\psi$, we have obtained the desired defining function $r$. 
\end{proof}

\begin{Lem}
\label{isolated}
Let $\mathcal{D}, \Sigma,$ and $h$ as in Proposition \ref{takeuchi}. Then, the zero set of $h_{\overline{z}}$ is finite.
\end{Lem}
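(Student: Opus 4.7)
The plan is to exploit that $h$ lifts to a harmonic map into the Poincar\'e disc in order to show that $h_{\overline{z}}$ satisfies a first-order elliptic equation of Cauchy--Riemann type with bounded zero-order coefficient, and then to invoke the classical similarity principle of Bers--Vekua.

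First I would work in a flat trivialization $\pi^{-1}(U_\nu) \simeq U_\nu \times \D$, in which $h$ is a smooth map $U_\nu \to \D$. A direct computation of the harmonic map equation for a target carrying the Poincar\'e metric $4(1-|w|^2)^{-2}|dw|^2$ yields
\[
h_{z\overline{z}} + \frac{2\overline{h}}{1-|h|^2}\, h_z\, h_{\overline{z}} = 0
\]
in any holomorphic coordinate $z$ on $\Sigma$. This identity is unaffected by changes of trivialization, since $\Aut(\D)$ acts by isometries of the Poincar\'e metric, and a holomorphic change of $z$ multiplies $h_{\overline{z}}$ by a nowhere vanishing factor, so the subset $\{ h_{\overline{z}} = 0 \} \subset \Sigma$ is intrinsically defined.

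Next, setting $v \eqdef \overline{h_{\overline{z}}}$, differentiating and conjugating the equation above gives the scalar linear $\dbar$-equation
\[
v_{\overline{z}} = B v, \qquad B \eqdef -\frac{2 h}{1-|h|^2}\,\overline{h_z},
\]
whose coefficient $B$ is smooth and bounded on $\Sigma$ (since $|h| \leq \delta < 1$ in every chart, following the notation of the proof of Proposition~\ref{takeuchi}). The Bers--Vekua similarity principle then provides a local factorization $v = e^{g} f$ with $f$ holomorphic and $g$ continuous, so that either $v$ vanishes on an open set or else all its zeros are isolated and of finite order.

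Finally, because $h$ is non-holomorphic by hypothesis, $v = \overline{h_{\overline{z}}}$ is not identically zero; and since a harmonic map into a real-analytic target is itself real-analytic, $v$ cannot vanish on an open set without vanishing everywhere. Hence $\{ h_{\overline{z}} = 0 \}$ is a discrete subset of the compact Riemann surface $\Sigma$ and is therefore finite. I expect the only real bookkeeping step to be the explicit derivation of the harmonic map equation in the form above; once it is in place, the similarity principle handles the rest.
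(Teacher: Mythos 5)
Your argument is correct, but it takes a genuinely different route from the paper. The paper's proof is a global one-liner via the Hopf differential: by harmonicity, $\mathrm{Hopf}(h) \eqdef h_z\overline{h_{\overline{z}}}\,(1-|h|^2)^{-2}\,dz^2$ is a well-defined \emph{holomorphic} quadratic differential on $\Sigma$; being not identically zero it has exactly $4(\text{genus of }\Sigma)-4$ zeros, and the zero set of $h_{\overline{z}}$ is contained in it. You instead work at the level of the harmonic map equation itself: you turn it into the linear equation $v_{\overline{z}}=Bv$ for $v=\overline{h_{\overline{z}}}$ with smooth locally bounded coefficient $B$, factor $v=e^{g}f$ with $f$ holomorphic (here the full Bers--Vekua similarity principle is even more than you need: since there is no $\overline{v}$ term, solving $g_{\overline{z}}=B$ on a disc already gives the factorization exactly), rule out vanishing on an open set by real-analyticity of harmonic maps into a real-analytic target, and finish by compactness of $\Sigma$; your remark on invariance under the flat transition maps in $\Aut(\D)$ and holomorphic changes of $z$ correctly settles well-definedness of the zero set, which both proofs need. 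Comparing the two: the Hopf-differential argument is shorter and gives the sharp count of at most $4(\text{genus of }\Sigma)-4$ zeros with multiplicity, but it requires $\mathrm{Hopf}(h)\not\equiv 0$, i.e.\ that $h$ be neither holomorphic nor anti-holomorphic; your argument uses only non-holomorphicity of $h$, avoids the global theory of quadratic differentials and the genus altogether, and localizes the issue completely, at the price of invoking the similarity principle (or the elementary exponential factorization) together with an analyticity/unique-continuation step. Both proofs are complete.
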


\begin{proof}
We have well-defined forms $|h_z|(1 - |h|^2)^{-1} |dz|$, $|h_{\overline{z}}|(1 - |h|^2)^{-1} |dz|$ 
and $\mathrm{Hopf}(h) \eqdef h_z \overline{h_{\overline{z}}} (1 - |h|^2)^{-2} dz^2$ on $\Sigma$. 
The harmonicity of $h$ is equivalent to holomorphicity of $\mathrm{Hopf}(h)$, whose zero set consists of 
$4(\text{genus of }\Sigma) - 4$ points. (Note that the assumption implies that $\pi_1(\Sigma)$ is non-abelian, thus the genus of $\Sigma > 1$.)
Therefore the zero set of $h_{\overline{z}}$ is also finite.

\end{proof}

\begin{Que}
What about the case (iii)? We know an example in which $\mathcal{D}_\rho$ is Stein but not Takeuchi 1-complete (\cite{ohsawa-sibony1998}*{Theorem 1.2}). 
\end{Que}

\section{Bochner-Hartogs type extension theorem}
We will state a Bochner-Hartogs type extension theorem for CR sections 
of finite regularity, which can be obtained by established procedures as 
in \cite{ohsawa1999}, \cite{brinkschulte2004} and \cite{chakrabartishaw2012}.
Here we give a simple proof for the reader's convenience. 

\begin{Thm}
\label{extension}
Let $X$ be a connected compact complex manifold of dimension $n \geq 2$, 
$L$ a holomorphic line bundle over $X$, and 
$M$ a $\Cont^\infty$ compact Levi-flat real hypersurface of $X$ 
which splits $X$ into two Takeuchi 1-complete domains $D \sqcup D'$. 
Then, there exists $\kappa \in \N$ such that 
any $\Cont^\kappa$ CR section of $L|M$ extends to a holomorphic section of $L$.
\end{Thm}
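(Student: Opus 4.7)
Plan: The proof follows the standard Bochner--Hartogs strategy: smoothly extend $s$ across $M$, solve $\dbar$ separately on $D$ and $D'$ to correct the extension to a holomorphic section on each side, and then glue the two sides across $M$. The Takeuchi 1-completeness of both components powers the $\dbar$-solving step, and Levi-flatness is what makes the final gluing holomorphic.

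Step 1. Using a local foliation chart in which $M=\{\mathrm{Im}\,z_n=0\}$, I would smoothly extend $s$ to a section $\tilde s$ of $L$ on $X$ that is CR to high order on $M$. Given a defining function $r$ of $M$ with $r<0$ on $D$ and $r>0$ on $D'$, a Whitney--Seeley construction in the transverse direction, combined with iterated Taylor corrections exploiting the CR equation, arranges $\alpha:=\dbar\tilde s$ to vanish on $M$ to order comparable to $\kappa$: $|\alpha|=O(|r|^{k})$ with $k\to\infty$ as $\kappa\to\infty$.

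Step 2. On $D$, solve $\dbar u=\alpha$ with $u\to 0$ on $M$, and similarly on $D'$. The hypothesis that $D$ is Takeuchi 1-complete supplies, via Theorem \ref{completeness}, the complete K\"ahler metric $\omega_D=-i\partial\dbar\log(-r)$ and a bounded plurisubharmonic exhaustion, together with the uniform lower bound on the complex Hessian of $-\log(-r)$ built into the definition. Taking as weight $\phi_N:=-N\log(-r)$ plus a fixed smooth Hermitian weight for $L$, I would apply H\"ormander's $L^2$-estimate for $\dbar$ on the complete K\"ahler manifold $(D,\omega_D)$. The boundary vanishing of $\alpha$ from Step 1 places the datum in $L^2_{\phi_N}$ for some $N$ growing linearly with $k$, and the estimate produces $u$ satisfying $\int_D|u|^2 e^{-\phi_N}\,dV_{\omega_D}<\infty$. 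Integrability of $|u|^2(-r)^{-N}$ against $dV_{\omega_D}$ forces $u$ to vanish on $M$ to positive order in the $L^2$ sense, and interior elliptic regularity of $\dbar$ up to the Levi-flat boundary promotes this to pointwise vanishing $u|_M=0$ with $u$ continuous on $\overline D$, provided $\kappa$ was chosen large enough; the same procedure on $D'$ yields $u'$.

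Step 3. Put $F:=\tilde s-u$ on $\overline D$ and $F':=\tilde s-u'$ on $\overline{D'}$. Both are continuous up to $M$, holomorphic in the interior, and coincide with $s$ on $M$, so they glue to a continuous section $G$ of $L$ on $X$ that is holomorphic off $M$. In local holomorphic coordinates with $M=\{\mathrm{Im}\,z_n=0\}$ (which exist because $M$ is $\Cont^\infty$ Levi-flat), Painlev\'e's theorem applied in $z_n$ for each fixed $(z_1,\dots,z_{n-1})$ makes $G$ holomorphic in $z_n$; since $G$ is separately holomorphic in $z'=(z_1,\dots,z_{n-1})$ as well --- on $D$ and $D'$ by construction and on $M$ because $s$ is CR and hence holomorphic on each Levi leaf $\{z_n=\mathrm{const}\}$ --- Hartogs' theorem on separately holomorphic continuous functions gives joint holomorphicity, and $G$ is the sought global holomorphic extension of $s$.

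Main obstacle: the delicate step is clearly Step 2 --- upgrading a weighted $L^2$-solution on a Takeuchi 1-complete domain to a classical solution vanishing pointwise on $M$. It is precisely the Takeuchi 1-completeness hypothesis, rather than mere Stein-ness or hyperconvexity, that reconciles the three competing demands: the finite vanishing order $k$ of $\alpha$ obtained in Step 1, the weight exponent $N$ for which H\"ormander's estimate on $(D,\omega_D)$ actually yields a solution, and the Sobolev/elliptic regularity needed to convert $L^2$-vanishing into pointwise vanishing on $M$. This is also why the theorem is only stated with $\kappa$ sufficiently large rather than $\kappa=0$ or $1$.
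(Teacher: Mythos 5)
Your Steps 1 and the general shape of Step 2 do match the paper (high-order CR extension, then a weighted $L^2$ correction on each Takeuchi 1-complete side), but Step 2 contains a genuine gap, in two places. First, the weights are inconsistent: with the plurisubharmonic weight $\phi_N=-N\log(-r)$ that H\"ormander's theorem requires, the solution you get satisfies $\int_D|u|^2(-r)^{N}\,dV<\infty$, which carries no information at $M$; the bound you actually use later, $\int_D|u|^2(-r)^{-N}\,dV<\infty$, corresponds to the metric $h_0(-r)^{-N}$, whose curvature is very \emph{negative} near $M$, so the standard positivity hypothesis of H\"ormander's estimate fails and you cannot simply ``apply H\"ormander''. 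The paper's proof hinges on exactly this point: it takes $h=h_0 r^{-N_0}$ with $N_0$ chosen so that $i\Theta_h<0$ on $D$, and obtains the a priori estimate from the Nakano/Andreotti--Vesentini inequality for $L$-valued $(0,1)$-forms, where negative curvature (eigenvalues tending to $-N_0$ with respect to the complete metric $g=i\partial\overline\partial(-\log(-r))$) is precisely what makes $\llangle[i\Theta_h,\Lambda]u,u\rrangle$ coercive. Your proposal never identifies this mechanism, and without it the decisive weighted bound on $u$ is not obtained.

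Second, your passage across $M$ relies on ``interior elliptic regularity of $\overline\partial$ up to the Levi-flat boundary'' to make $u$ continuous on $\overline D$ with $u|_M=0$, after which you glue by Painlev\'e/Morera and separate holomorphicity. Elliptic regularity is interior only; boundary regularity for $\overline\partial$ on a Levi-flat boundary is exactly what is unavailable (no strict pseudoconvexity, no subelliptic estimates), so this step would fail as stated. The paper avoids boundary regularity altogether: since $g$ is complete, the $L^2$ solution on $D\sqcup D'$ is a limit of \emph{compactly supported} smooth forms in the graph norm, hence its extension by zero satisfies $\overline\partial u=\overline\partial\tilde s$ in the distribution sense on all of $X$; then $\tilde s-u$ is a weak solution of the Cauchy--Riemann equations on $X$, hence holomorphic by interior ellipticity on $X$ (where $M$ is now interior), and only afterwards does the blowing-up weight force $u|_M=0$, so the extension restricts to $s$. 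This jump-across-$M$ argument is the key idea missing from your proposal; supplying it (or an equivalent duality argument in bidegree $(n,1)$) would repair both defects, and would also let you drop the Hartogs-type gluing in Step 3, which in any case presupposes the unproved boundary continuity.
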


\begin{proof}
We set 
\[
N_0 \eqdef \min \left\{ N \in \N \, \middle|
\begin{array}{l}
  i\Theta_{h_0} - N i \d' \dbar (- \log (-r)) < 0  \text{\quad on $D$, } \\
  i\Theta_{h_0} - N i \d' \dbar (- \log (-r')) < 0 \text{\quad on $D'$, } \\
  \text{$h_0$: hermitian metric of $L$}, \\
  \text{$r$ (resp. $r'$): defining function of $M$} \\
  \text{\quad which makes $D$ (resp. $D'$) Takeuchi 1-complete}
\end{array}
\right\}.
\]
The assumption yields $N_0 < \infty$. Put $\kappa \eqdef \lceil n + 1 + N_0/2 \rceil (\geq 4)$. 
Take $h_0$, $r$, and $r'$ to attain the minimum, 
and fix an arbitrary hermitian metric $g_0$ of $X$.

We denote by $\langle \cdot, \cdot \rangle_{g_0, h_0}$ (resp. $| \,\cdot\, |_{g_0, h_0} $)
the fiber metric (resp. norm) of $L \otimes \bigwedge \C TX^*$ determined by $g_0$ and $h_0$, 
and write $d \text{vol}_{g_0}$ for the volume form on $X$ determined by $g_0$. 
Integration with respect to these metrics is denoted by 
\[
\llangle \omega, \eta \rrangle_{g_0, h_0, D} \eqdef \int_D \langle \omega, \eta \rangle_{g_0, h_0} d\text{vol}_{g_0}
\]
and write $\| \omega \|^2_{g_0, h_0, D} \eqdef \langle \omega, \omega \rangle_{g_0, h_0, D}$. 
We also use the following notation for function spaces.
\begin{itemize}
\item $\Cont^\kappa_{(p,q)}(X, L)$: the space of $L$-valued $\Cont^\kappa$ $(p, q)$-forms over $X$.
\item $\Cont^\kappa_{0, (p,q)}(D, L)$: the space of $L$-valued compactly supported $\Cont^\kappa$ $(p, q)$-forms over $D$.
\item $L^2_{(p,q)}(D, L; g_0, h_0)$: the space of $L$-valued measurable $(p, q)$-forms over $D$ whose $\| \cdot \|_{g_0, h_0, D}$ norm is finite. 
\end{itemize}
We will omit the subscript $(p,q)$ when $(p, q) = (0, 0)$.

The proof is separated into three lemmas. 

\begin{Lem}
\label{finite_order_extension}
Let $s$ be a $\Cont^\kappa$ CR section of $L|M$. 
Then we can extend $s$ to $\tilde{s} \in \Cont^{2}(X, L)$ so that 
\begin{equation}
\label{vanishing_order}
| \dbar \tilde{s} |_0 \eqdef | \dbar \tilde{s} |_{g_0, h_0} = O(r^{\kappa - 2}) \text{\quad along $M$}
\end{equation}
where $r$ is any $\Cont^\infty$ defining function of $M$. 
\end{Lem}

\begin{proof}[Proof of Lemma \ref{finite_order_extension}]
Firstly, we extend $s$ to a $\Cont^\kappa$ section of $L$, still denoted by $s$, 
using a $\Cont^\infty$ collaring $M \times (-\epsilon, \epsilon) \to X$ of $M$ 
and a transversal cut-off function with enough small support. 
Since $s|M$ is CR, we can find a $\Cont^{\kappa-1}$ section of $L|M$, say $\alpha_1$, 
such that $\dbar s = \alpha_1 \dbar r$ on $M$.  
We extend $\alpha_1$ to a $\Cont^{\kappa-1}$ section of $L$. Put $s_1 \eqdef s - \alpha_1 r$. 
Then, $| \dbar s_1 |_0 = |(\dbar s - \alpha_1 \dbar r) - \dbar \alpha_1 r |_0 = O(r)$ 
because $\dbar s_1$ vanishes on $M$ and is of class $\Cont^{\kappa-2}$. 

Suppose we have inductively constructed a $\Cont^{\kappa-\ell}$ extension $s_\ell$ of $s$ 
with $s_\ell =  s - \alpha_1 r - \alpha_2 r^2/2 - \cdots - \alpha_\ell r^{\ell}/\ell$ and $|\dbar s_\ell |_0 = O(r^\ell)$. 
Write $\dbar s_\ell  = \beta_\ell r^\ell $ with $\beta_\ell \in \Cont^{\kappa-(\ell+1)}_{(0,1)}(X, L)$. 
We obtain $0 = \dbar^2 s_\ell = \dbar \beta_{\ell} r + \dbar r \wedge \beta_\ell$. 
Thus, we can find $\alpha_{\ell+1} \in \Cont^{\kappa-(\ell+1)}(X, L)$ such that 
$\beta_\ell = \alpha_{\ell+1} \dbar r$ on $M$. 
Putting  $s_{\ell+1} \eqdef s_{\ell} - \alpha_{\ell+1} r^{\ell+1}/(\ell + 1)$ 
gives $| \dbar s_{\ell+1} |_0 = | (\beta_\ell  -  \alpha_{\ell+1} \dbar r) r^\ell - \dbar \alpha_{\ell+1} r^{\ell+1} |_0 =  O(r^{\ell+1})$ 
while $\beta_\ell - \alpha_{\ell + 1} r^\ell$ is differentiable, which holds if $\kappa - (\ell + 1) \geq 1$.

Letting $\tilde{s} \eqdef s_{\kappa - 2}$ completes the proof.
\end{proof}

We perform a correction to $\tilde{s}$ to obtain the desired holomorphic extension. 
Once we solve the $\dbar$-equation $\dbar u = \dbar \tilde{s}$ on $X$ in the distribution sense 
with the condition $u|M = 0$, we obtain the desired extension $\tilde{s} - u $ 
since holomorphic functions are characterized as weak solutions of the Cauchy-Riemann equation. 

By Theorem \ref{completeness}, $i\d'\dbar (- \log (-r))$ defines a complete K\"ahler metric $g$ on $D$, 
which blows up in $O(r^{-2})$ along $M$. 
Consider the hermitian metric $h = h_0 r^{-N_0}$ on $L$. 
The condition (\ref{vanishing_order}) on $\tilde{s}$ implies 
\begin{align*}
\| \dbar \tilde{s} \|^2_{g, h} 
&\eqdef \| \dbar \tilde{s} \|^2_{g, h, D} \\
&= \int_D | \dbar \tilde{s} |^2_{g, h} d\text{vol}_g
= \int_D O(r^{2(\kappa - 2)}) O(r^2) O(r^{-N_0}) O(r^{-2n})
< \infty, 
\end{align*}
i.e., $\dbar \tilde{s} \in L^2_{(0,1)}(D, L; g, h)$. 
We can solve $\dbar u = \dbar \tilde{s}$ on $D$ 
thanks to the following $L^2$ cohomology vanishing theorem.

\begin{Lem}
\label{vanishing}
For any $v \in L^2_{(0,1)}(D, L; g, h)$ with $\dbar v = 0$, 
there exists a solution $u \in L^2(D, L; g, h)$ of $\dbar u = v$ 
in the sense that there exists a sequence 
$u_n \in \Cont^\infty_0 (D, L)$ such that 
$u_n \to u$ in $L^2(D, L; g, h)$ and $\dbar u_n \to v$ in $L^2_{(0,1)}(D, L; g, h)$.
\end{Lem}

\begin{proof}[Proof of Lemma \ref{vanishing}]
By the standard $L^2$ method of Andreotti-Vesentini, 
the conclusion follows from the following estimate 
\[
\| \dbar u \|^2_{g, h} + \| \dbar^*_{g, h} u \|^2_{g, h} \gtrsim \| u \|^2_{g, h}
\]
for $u \in C_{0, (0, 1)}^\infty(D, L)$. Here we denote by $\dbar^*_{g, h}$ the formal adjoint 
of the operator $\dbar \colon L^2(D, L; g,h) \to L^2_{(0,1)}(D, L; g, h)$.
Note that we have used the completeness of $g$ to obtain the solution 
not only in the sense of distribution but also in the sense above. 

By the Nakano inequality, we achieve the estimate as follows:
\begin{align*}
\| \dbar u \|^2_{g, h} + \| \dbar^*_{g, h} u \|^2_{g, h}
& \gtrsim {\llangle} [i\Theta_h, \Lambda]u, u \rrangle_{g, h} = - \llangle i\Theta_h u, L u \rrangle_{g, h} \\
& \gtrsim -\min \left\{
\begin{array}{l}
\text{sum of the $(n-1)$ eigenvalues of $i\Theta_h$} \\
\text{with respect to $g$}
\end{array}
\right\}  \| u \|^2_{g, h}. 
\end{align*}
The eigenvalues of $i\Theta_h$ with respect to $g$ tend to $-N_0$ near $M$. 
It follows that the RHS $\gtrsim \| u \|^2_{g, h}$.
\end{proof}

Performing the same procedure on $D'$, we obtain a section 
$u$ of $L|D \sqcup D'$ with $\dbar u = \dbar \tilde{s}$ on $D \sqcup D'$ in the sense above. 
Consider the zero extension of $u$ on $X$, still denoted by $u$. 
The following lemma completes the proof of Theorem \ref{extension}.

\begin{Lem}
$\dbar u = \dbar \tilde{s}$ on $X$ in the sense of distribution. 
\end{Lem}
\begin{proof}
Let $u_n \in \Cont^\infty_0(D \sqcup D', L) \subset \Cont^\infty(X, L) $ be the approximation of $u$ found in Lemma \ref{vanishing}. 
Since $L^2(D, L; g_0, h_0) \hookrightarrow L^2(D, L; g, h)$ is continuous, we have
$u_n \to u$ in $L^2(D \sqcup D', L; g_0, h_0) \simeq L^2(X, L; g_0, h_0)$. 

Take a test function $\phi \in \Cont^\infty(X, L)$. Denote by $\dbar^*_0$ the formal adjoint 
of the operator $\dbar \colon L^2(X, L; g_0,h_0) \to L^2_{(0,1)}(X, L; g_0, h_0)$. Then, 
\begin{align*}
\llangle \dbar u - \dbar \tilde{s}, \phi \rrangle_{g_0, h_0, X}
& = \llangle u, \dbar^*_0 \phi \rrangle_{g_0, h_0, X} - \llangle \dbar \tilde{s}, \phi \rrangle_{g_0, h_0, X}\\
& = \lim_{n \to \infty}\llangle u_n, \dbar^*_0 \phi \rrangle_{g_0, h_0, X} - \llangle \dbar \tilde{s}, \phi \rrangle_{g_0, h_0, X}\\
& = \lim_{n \to \infty}\llangle u_n, \dbar^*_0 \phi \rrangle_{g_0, h_0,  D \sqcup D'} - \llangle \dbar \tilde{s}, \phi \rrangle_{g_0, h_0, D \sqcup D'}\\
& = \lim_{n \to \infty}\llangle \dbar u_n - \dbar \tilde{s}, \phi \rrangle_{g_0, h_0,  D \sqcup D'} \\
& = 0.
\end{align*}
It completes the proof.
\end{proof}

\end{proof}

\begin{Cor}
Suppose $X$, $L$, $M$, and $\kappa$ as in Theorem \ref{extension}. 
Then, all of the $\Cont^{\kappa}$ CR sections of $L|M$ are automatically of class $\Cont^{\infty}$,
and they form a finite dimensional vector space.
\end{Cor}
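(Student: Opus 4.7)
The plan is to use Theorem \ref{extension} to reduce everything to classical facts about holomorphic sections on the compact complex manifold $X$. First, for any $\Cont^\kappa$ CR section $s$ of $L|M$, Theorem \ref{extension} produces a holomorphic extension $\tilde{s} \in H^0(X, L)$. In any local holomorphic trivialization, $\tilde{s}$ is given by a holomorphic function and so is automatically of class $\Cont^\infty$ (indeed $\Cont^\omega$); restricting to $M$, we conclude that $s = \tilde{s}|M$ is itself $\Cont^\infty$.

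Next I would analyze the restriction map
\[
\rho \colon H^0(X, L) \to \{\Cont^\kappa \text{ CR sections of } L|M\}, \quad \tilde{s} \mapsto \tilde{s}|M.
\]
Theorem \ref{extension} says precisely that $\rho$ is surjective. For injectivity, suppose $\tilde{s} \in H^0(X, L)$ satisfies $\tilde{s}|M = 0$. Then the zero locus $Z(\tilde{s})$ is a complex analytic subvariety of the connected complex manifold $X$ containing $M$. If $\tilde{s} \not\equiv 0$, then $Z(\tilde{s})$ would be a proper subvariety, hence have real dimension at most $2n - 2$; but $M$ has real dimension $2n - 1$, a contradiction. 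Therefore $\tilde{s} \equiv 0$, $\rho$ is injective, and hence $\rho$ is a linear isomorphism onto the space of $\Cont^\kappa$ CR sections.

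Finally, by the Cartan--Serre finiteness theorem applied to the coherent analytic sheaf $\mathcal{O}(L)$ on the compact complex manifold $X$ (equivalently, by elliptic regularity for the Cauchy--Riemann operator on $X$), $H^0(X, L)$ is a finite dimensional complex vector space. Combined with the isomorphism above, this yields both the $\Cont^\infty$-regularity and the finite dimensionality of the space of $\Cont^\kappa$ CR sections of $L|M$. There is no serious obstacle here beyond unpacking Theorem \ref{extension}; the only step requiring a moment's care is the injectivity of $\rho$, which relies on the real-dimension obstruction for zero sets of holomorphic sections together with the connectedness of $X$ assumed in Theorem \ref{extension}.
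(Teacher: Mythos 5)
Your argument is correct and is essentially the proof the paper intends (the corollary is stated without proof as an immediate consequence of Theorem \ref{extension}): extend the section holomorphically, use the automatic smoothness of holomorphic sections, and invoke the finite dimensionality of $H^0(X,L)$ on the compact manifold $X$. The only remark worth making is that the injectivity of the restriction map, though correctly established via the dimension count on the zero locus, is not actually needed --- surjectivity alone already bounds the dimension of the space of $\Cont^\kappa$ CR sections by $\dim H^0(X,L)$.
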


We will use the following form of Theorem \ref{extension} in the proof of Main Theorem.
\begin{Cor}
\label{smooth_extension}
Suppose $X$, $L$ and $M$ as in Theorem \ref{extension}. 
Then, any $\Cont^\infty$ CR section of $L|M$ extends to a holomorphic section of $L$.
\end{Cor}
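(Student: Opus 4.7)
The plan is to invoke Theorem \ref{extension} directly, since this corollary is simply its specialization to the strongest regularity class. Concretely, let $\kappa \in \N$ be the integer produced by Theorem \ref{extension} for the given data $(X, L, M)$, and let $s$ be any $\Cont^\infty$ CR section of $L|M$. Because $\Cont^\infty \subset \Cont^\kappa$, the section $s$ satisfies the hypotheses of Theorem \ref{extension} verbatim, and therefore extends to a holomorphic section of $L$ over $X$. All the analytic content, in particular the construction of the $\Cont^2$ preliminary extension $\tilde{s}$ with $|\dbar\tilde{s}|_0 = O(r^{\kappa-2})$, the $L^2$ solution of $\dbar u = \dbar \tilde{s}$ with respect to the complete K\"ahler metric $-i\d'\dbar\log(-r)$ and the twisted metric $h_0 r^{-N_0}$, and the verification that $u$ extends by zero across $M$ as a weak solution, has already been absorbed into the proof of Theorem \ref{extension}.

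Consequently there is no genuine obstacle here: the statement is essentially definitional once Theorem \ref{extension} is granted. The reason to record it as a separate corollary is purely for the application in Section 5. In the proof of the Main Theorem we will want to translate a hypothetical family of $\Cont^\infty$ CR sections $s_0, \ldots, s_N$ of $\pi^*L|M$ into genuine holomorphic sections of $\pi^*L$ on the ambient ruled surface $X$, and the $\Cont^\infty$ formulation lets us do so without ever having to mention, let alone track, the integer $\kappa$ that appears in the proof of Theorem \ref{extension}. This makes the subsequent argument, which combines Takeuchi $1$-completeness (Proposition \ref{takeuchi}) with properties of the space of holomorphic sections of $\pi^*L$ on $X$, completely transparent.
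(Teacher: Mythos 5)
Your argument is correct and coincides with the paper's (implicit) reasoning: since a $\Cont^\infty$ CR section is in particular of class $\Cont^\kappa$ for the $\kappa$ furnished by Theorem \ref{extension}, the extension is immediate. Nothing further is needed.
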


\section{Proof of Main Theorem}

\begin{proof}[Proof of Main Theorem]
From Proposition \ref{takeuchi}, $\mathcal{D}$ is Takeuchi 1-complete. 
The harmonic section of $X \setminus \overline{\mathcal{D}}$ is obtained 
by conjugating the harmonic section of $\mathcal{D}$. 
Thus, $X \setminus \overline{\mathcal{D}}$ is also Takeuchi 1-complete.
Hence, Corollary \ref{smooth_extension} implies that for any $n \geq 1$,
all of the $\Cont^\infty$ CR sections of $(\pi^*L|M)^{\otimes n}$ extend to 
holomorphic sections of $(\pi^*L)^{\otimes n}$.

On the other hand, $\pi^* \colon H^0(\Sigma, L^{\otimes n}) \to H^0(X, (\pi^*L)^{\otimes n})$ 
gives an isomorphism. Since we can give a trivializing cover of $(\pi^*L)^{\otimes n}$ by 
pulling back that of $L$, 
and the sections should be constant along any fiber $\pi^{-1}(p) \simeq \CP^1$ in these trivializations.
Hence it is impossible for the sections in $H^0(X, (\pi^*L)^{\otimes n})$ 
to separate points in the same fiber for any $n$. 
Therefore, we cannot make a projective embedding by any ratio of those sections. 
\end{proof}

We conclude this paper with further questions.

\begin{Que}
Can we prove Main Theorem intrinsically, i.e., without looking the natural Stein filling? 
\end{Que}

\begin{Que}
Let $M$ be a compact Levi-flat CR manifold, and $L$ a CR line bundle over $M$. 
We define the threshold regularity $\kappa(M, L)$ to be the minimal $\kappa \in \N \cup \{ \infty \}$, if exists,
so that $\Cont^\kappa$ CR sections of $L$ form a finite dimensional vector space.
In the situation illustrated in Main Theorem,  
the proof of Theorem \ref{extension} indicates that 
$\kappa(M, (\pi^*L|M)^{\otimes n})$ is well-defined and  
$\kappa(M, (\pi^*L|M)^{\otimes n}) = O(n)$ as $n \to \infty$. 
On the other hand, Ohsawa-Sibony's projective embedding theorem implies that 
 $\kappa(M, (\pi^*L|M)^{\otimes n}) \to \infty$ as $n \to \infty$. 
Can we read any dynamical property of the Levi foliation 
from the asymptotic behavior of the $\kappa(M, (\pi^*L|M)^{\otimes n})$?
\end{Que}


\subsection*{Acknowledgments}
The author would like to express his profound gratitude to his advisor T.\ Ohsawa. 
He is grateful to J.\ Brinkschulte for pointing out a mistake in a previous version of this article
and also grateful to B.-Y.\ Chen for useful comments on hyperconvexity.
He thanks R. Kobayashi and K. Matsumoto for helpful comments which 
improved the presentation of this paper. 
Part of this work was done during 
``Ecole d'\'et\'e 2012: Feuilletages, courbes pseudoholomorphes, applications'' at Institute Fourier, 
and ``Nagoya-Tongji joint workshop on Bergman kernel'' at Tongji University. 
He is grateful to both institutes for their hospitality and support.


\begin{bibdiv}
\begin{biblist}
\bib{barrett1992}{article}{
   author={Barrett, David E.},
   title={Global convexity properties of some families of three-dimensional
   compact Levi-flat hypersurfaces},
   journal={Trans. Amer. Math. Soc.},
   volume={332},
   date={1992},
   number={1},
   pages={459--474},
   issn={0002-9947},
   review={\MR{1055805 (93c:32026)}},
   doi={10.2307/2154042},
}

\bib{brinkschulte2004}{article}{
   author={Brinkschulte, Judith},
   title={The $\overline\partial$-problem with support conditions on some
   weakly pseudoconvex domains}, 
   journal={Ark. Mat.},
   volume={42},
   date={2004},
   number={2},
   pages={259--282},
   issn={0004-2080},
   review={\MR{2101387 (2005f:32064)}},
   doi={10.1007/BF02385479},
}

\bib{chakrabartishaw2012}{article}{
   author={Chakrabarti, Debraj},
   author={Shaw, Mei-Chi},
   title={$L^2$ Serre duality on domains in complex manifolds and
   applications},
   journal={Trans. Amer. Math. Soc.},
   volume={364},
   date={2012},
   number={7},
   pages={3529--3554},
   issn={0002-9947},
   review={\MR{2901223}},
   doi={10.1090/S0002-9947-2012-05511-5},
}

\bib{deroin2008}{article}{
   author={Deroin, Bertrand},
   title={Laminations dans les espaces projectifs complexes},
   language={French, with English and French summaries},
   journal={J. Inst. Math. Jussieu},
   volume={7},
   date={2008},
   number={1},
   pages={67--91},
   issn={1474-7480},
   review={\MR{2398147 (2009e:32029)}},
   doi={10.1017/S1474748007000175},
}

\bib{diederich-ohsawa1985}{article}{
   author={Diederich, Klas},
   author={Ohsawa, Takeo},
   title={Harmonic mappings and disc bundles over compact K\"ahler
   manifolds},
   journal={Publ. Res. Inst. Math. Sci.},
   volume={21},
   date={1985},
   number={4},
   pages={819--833},
   issn={0034-5318},
   review={\MR{817167 (87g:32017)}},
   doi={10.2977/prims/1195178932},
}

\bib{diederich-ohsawa2007}{article}{
   author={Diederich, Klas},
   author={Ohsawa, Takeo},
   title={On the displacement rigidity of Levi flat hypersurfaces---the case
   of boundaries of disc bundles over compact Riemann surfaces},
   journal={Publ. Res. Inst. Math. Sci.},
   volume={43},
   date={2007},
   number={1},
   pages={171--180},
   issn={0034-5318},
   review={\MR{2319541 (2008m:32023)}},
}

\bib{eliashberg-thurston1998}{book}{
   author={Eliashberg, Yakov M.},
   author={Thurston, William P.},
   title={Confoliations},
   series={University Lecture Series},
   volume={13},
   publisher={American Mathematical Society},
   place={Providence, RI},
   date={1998},
   pages={x+66},
   isbn={0-8218-0776-5},
   review={\MR{1483314 (98m:53042)}},
}
		
\bib{fornaess-wold2011}{article}{
   author={Forn{\ae}ss, John Erik},
   author={Wold, Erlend Forn{\ae}ss},
   title={Solving $\bar\partial_b$ on hyperbolic laminations},
   eprint={arXiv:1108.2286 [math.CV]},
   date = {2011},
}

\bib{ghys1997}{article}{
   author={Ghys, {\'E}tienne},
   title={Laminations par surfaces de Riemann},
   language={French, with English and French summaries},
   conference={
      title={Dynamique et g\'eom\'etrie complexes},
      address={Lyon},
      date={1997},
   },
   book={
      series={Panor. Synth\`eses},
      volume={8},
      publisher={Soc. Math. France},
      place={Paris},
   },
   date={1999},
   pages={ix, xi, 49--95},
   review={\MR{1760843 (2001g:37068)}},
}

\bib{gromov1999}{article}{
   author={Gromov, Misha},
   title={Topological invariants of dynamical systems and spaces of
   holomorphic maps. I},
   journal={Math. Phys. Anal. Geom.},
   volume={2},
   date={1999},
   number={4},
   pages={323--415},
   issn={1385-0172},
   review={\MR{1742309 (2001j:37037)}},
   doi={10.1023/A:1009841100168},
}

\bib{harrington-shaw2007}{article}{
   author={Harrington, Phillip S.},
   author={Shaw, Mei-Chi},
   title={The strong Oka's lemma, bounded plurisubharmonic functions and the
   $\overline{\partial}$-Neumann problem},
   journal={Asian J. Math.},
   volume={11},
   date={2007},
   number={1},
   pages={127--139},
   issn={1093-6106},
   review={\MR{2304586 (2009m:32061)}},
}

\bib{inaba1992}{article}{
   author={Inaba, Takashi},
   title={On the nonexistence of CR functions on Levi-flat CR manifolds},
   journal={Collect. Math.},
   volume={43},
   date={1992},
   number={1},
   pages={83--87},
   issn={0010-0757},
   review={\MR{1214226 (94i:32009)}},
}

\bib{martinez-torres2011}{article}{
   author={Mart{\'{\i}}nez Torres, D.},
   title={Generic linear systems for projective CR manifolds},
   journal={Differential Geom. Appl.},
   volume={29},
   date={2011},
   number={3},
   pages={348--360},
   issn={0926-2245},
   review={\MR{2795843 (2012i:32043)}},
   doi={10.1016/j.difgeo.2011.03.004},
}

\bib{ohsawa-sibony1998}{article}{
   author={Ohsawa, Takeo},
   author={Sibony, Nessim},
   title={Bounded p.s.h. functions and pseudoconvexity in K\"ahler manifold},
   journal={Nagoya Math. J.},
   volume={149},
   date={1998},
   pages={1--8},
   issn={0027-7630},
   review={\MR{1619572 (2000b:32062)}},
}

\bib{ohsawa1999}{article}{
   author={Ohsawa, Takeo},
   title={Pseudoconvex domains in $\bold P^n$: a question on the
   $1$-convex boundary points},
   conference={
      title={Analysis and geometry in several complex variables},
      address={Katata},
      date={1997},
   },
   book={
      series={Trends Math.},
      publisher={Birkh\"auser Boston},
      place={Boston, MA},
   },
   date={1999},
   pages={239--252},
   review={\MR{1699848 (2000j:32023)}},
}

\bib{ohsawa-sibony2000}{article}{
   author={Ohsawa, Takeo},
   author={Sibony, Nessim},
   title={K\"ahler identity on Levi flat manifolds and application to the
   embedding},
   journal={Nagoya Math. J.},
   volume={158},
   date={2000},
   pages={87--93},
   issn={0027-7630},
   review={\MR{1766573 (2001d:32055)}},
}

\bib{ohsawa2012}{article}{
   author={Ohsawa, Takeo},
   title={On Projectively Embeddable Complex-Foliated Structures},
   journal={Publ. Res. Inst. Math. Sci.},
   volume={48},
   date={2012},
   number={3},
   pages={735--747},
   issn={0034-5318},
   review={\MR{2973399}},
   doi={10.2977/PRIMS/87},
}

\bib{Sampson1978}{article}{
   author={Sampson, J. H.},
   title={Some properties and applications of harmonic mappings},
   journal={Ann. Sci. \'Ecole Norm. Sup. (4)},
   volume={11},
   date={1978},
   number={2},
   pages={211--228},
   issn={0012-9593},
   review={\MR{510549 (80b:58031)}},
}

\bib{takeuchi1964}{article}{
   author={Takeuchi, Akira},
   title={Domaines pseudoconvexes infinis et la m\'etrique riemannienne dans
   un espace projectif},
   language={French},
   journal={J. Math. Soc. Japan},
   volume={16},
   date={1964},
   pages={159--181},
   issn={0025-5645},
   review={\MR{0173789 (30 \#3997)}},
}

\end{biblist}
\end{bibdiv}

\end{document}